\newcommand\xleftrightarrow[2][]{%
  \ext@arrow 9999{\longleftrightarrowfill@}{#1}{#2}}
\newcommand\longleftrightarrowfill@{%
  \arrowfill@\leftarrow\relbar\rightarrow}
\newtheorem{proposition}{Proposition}
\newtheorem{lemma}[proposition]{Lemma}
\newtheorem{theorem}[proposition]{Theorem}
\newtheorem{corollary}[proposition]{Corollary}
\theoremstyle{definition}
\newtheorem{definition-theorem}[proposition]{Definition--Theorem}
\newtheorem{definition-proposition}[proposition]{Definition--Proposition}
\numberwithin{equation}{section}
\numberwithin{proposition}{section}
\title{Commutation of transfer and Aubert-Zelevinski involution for metaplectic groups}
\author{Fei Chen}
\date{}
\begin{document}
\maketitle
\begin{abstract}
    A result of K. Hiraga says endoscopic transfer is compatible with Aubert-Zelevinski involution. In this short note, we generalize Hiraga's result to metaplectic group setting.
\end{abstract}
\setlength{\unitlength}{2.5cm}
\section{Introduction}
\paragraph{} The theory of endoscopy was developed to understand the internal structure of L-packets and it is foundational to the stabilization of the trace
formula. The formulation of endoscopy was generalized by W-W. Li \cite{li2011transfert} to metaplectic groups and had lead to the recent stabilization of trace formula for metaplectic groups \cite{li2021stabilization}.

An important property of endoscopic transfer, proved by Hiraga \cite{Hiraga} 
 for ordinary endoscopy and by B. Xu \cite[Appendix]{xu2017moeglin} for twisted endoscopy, is that Aubert-Zelevinski  involution is compatible with endoscopic transfer. This property plays a role in Arthur's endoscopic classification \cite{arthur2013endoscopic} when one want to study non-tempered representations.

The goal of this short note is to prove endoscopic transfer for metaplectic groups  is also compatible with Aubert-Zelevinski involution:

\begin{theorem}\label{main}
    
Let $\tilde{G}$ be a metaplectic group and let $G^!$ be an  endoscopic datum of $\tilde{G}$. Then for any stable virtual character $\Theta$ of $G^!$ we have

\begin{equation*}
    {D}_{\Tilde{G}}\circ \mathcal{T}^{\vee}_{G^!,\tilde{G}}(\Theta)=\mathcal{T}^{\vee}_{G^!,\tilde{G}}\circ D_{G^!}(\Theta)
\end{equation*}
where $D$ stands for  the Aubert-Zelevinski involution and $\mathcal{T}^{\vee}_{G^!,\tilde{G}}$ stands for endoscopic transfer for distributions.
\end{theorem}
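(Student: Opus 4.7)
The plan is to reduce the identity to a pointwise statement on the strongly regular semisimple locus, and then to invoke Hiraga's local formula expressing the effect of the Aubert--Zelevinski involution on a character at such an element as multiplication by a sign depending only on the centralizer.

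First, I would use that a (stable) virtual character is determined by its values on the strongly regular semisimple locus, and that the defining property of the metaplectic transfer for distributions is the character identity
\[
\Theta_{\mathcal{T}^{\vee}_{G^!,\tilde{G}}(\Theta)}(\tilde{\delta}) \;=\; \sum_{\gamma^! \leftrightarrow \tilde{\delta}} \Delta(\gamma^!, \tilde{\delta})\,\Theta(\gamma^!)
\]
for every strongly regular semisimple genuine $\tilde{\delta} \in \tilde{G}$, with $\Delta$ the metaplectic transfer factor and the sum running over endoscopic matches. The theorem then reduces to the pointwise identity
\[
\Theta_{D_{\tilde{G}} \circ \mathcal{T}^{\vee}(\Theta)}(\tilde{\delta})
\;=\; \sum_{\gamma^! \leftrightarrow \tilde{\delta}} \Delta(\gamma^!,\tilde{\delta})\, \Theta_{D_{G^!}(\Theta)}(\gamma^!)
\]
for every such $\tilde{\delta}$.

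Second, the key input is Hiraga's pointwise formula: for a connected reductive $H$ and a strongly regular semisimple $\gamma \in H$, one has $\Theta_{D_H(\pi)}(\gamma) = \varepsilon_H(\gamma)\,\Theta_\pi(\gamma)$ with $\varepsilon_H(\gamma) = (-1)^{\dim A_{H_\gamma} - \dim A_H}$ a sign depending only on the split ranks of the centralizer torus and of the center. I would establish the same formula in the metaplectic setting for a genuine $\tilde{\pi}$ at a genuine regular semisimple $\tilde{\delta}$: the involution $D_{\tilde{G}}$ is defined by the same alternating sum over standard parabolics of $\tilde{G}$, and the van~Dijk-type character formula for the parabolic induction of a genuine representation, evaluated at a regular semisimple genuine element, takes the same shape as in the linear case because the metaplectic cover splits canonically over the centralizer torus of $\tilde{\delta}$. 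Granted this, the pointwise identity of step one follows once we check $\varepsilon_{G^!}(\gamma^!) = \varepsilon_{\tilde{G}}(\tilde{\delta})$ whenever $\gamma^! \leftrightarrow \tilde{\delta}$; but the centralizer tori are inner twists of each other under endoscopic matching (so their split ranks agree), and by construction of endoscopic data for $\tilde{G}$ the split central ranks of $G^!$ and of $\tilde{G}$ coincide. Thus the common sign factors out of both sides of the identity, yielding the theorem.

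The main obstacle is step two: verifying Hiraga's pointwise formula in the genuine/metaplectic setting. Concretely this requires (i) an alternating-sum presentation of $D_{\tilde{G}}$ suitable for genuine representations, compatible with the genuine Jacquet and parabolic-induction functors of Li's framework, and (ii) the van~Dijk descent formula for the character of a parabolically induced genuine representation at a regular semisimple genuine element. Both are morally available because the metaplectic cover splits canonically over tori and Levi subgroups, but the normalization of the genuine cocycle must be tracked throughout, and one needs to confirm that no correction term arises which would spoil Hiraga's identity. Once this is in place, the matching of split ranks in step three is a direct structural verification, and the theorem follows by pulling the common sign out of the transfer identity.
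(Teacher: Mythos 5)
The central step of your proposal --- the ``pointwise formula'' $\Theta_{D_H(\pi)}(\gamma)=\varepsilon_H(\gamma)\,\Theta_\pi(\gamma)$ for \emph{all} strongly regular semisimple $\gamma$, with a sign $\varepsilon_H(\gamma)$ depending only on the centralizer --- is false, and it is not what Hiraga proves. Such an identity holds only on the \emph{elliptic} regular set, where it follows from van Dijk's formula because all properly induced characters vanish there. Already for $H=\operatorname{GL}(2)$ it fails off the elliptic locus: $D_H(\mathbf 1)=\pm\mathrm{St}$ in the Grothendieck group, and at a split regular element $\gamma$ one has $\Theta_{\mathbf 1}(\gamma)=1$ while $\Theta_{\mathrm{St}}(\gamma)+\Theta_{\mathbf 1}(\gamma)$ is the character of a principal series, an unbounded function of $\gamma$; so no sign independent of $\pi$ can relate $\Theta_{D_H\pi}$ and $\Theta_\pi$ pointwise. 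Since a virtual character is not determined by its restriction to the elliptic set, you cannot salvage the argument by checking the identity only there. What is true away from the elliptic set is a \emph{descent} formula expressing $\Theta_{D_G\pi}(\gamma)$ through Jacquet modules along a Levi containing the centralizer of $\gamma$; as soon as you invoke that, you are forced to control how $\mathcal T^{\vee}$ interacts with Jacquet functors, which is exactly the route Hiraga takes and the paper follows.

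Concretely, the paper's proof rests on two ingredients your outline does not supply. First, the commutation of transfer with Jacquet modules,
$r^{\tilde G}_{\tilde M}\circ\mathcal T^{\vee}_{G^!,\tilde G}=\sum_{s}\mathcal T^{\vee}_{M_s^!,\tilde M}\circ[z_s]\circ r^{G^!}_{M_s^!}$,
proved via Casselman's character formula and a matching of stable classes; the central twists $[z_s]$ (multiplication by $-1$ on the $\operatorname{GL}(n_i'')$ blocks, reflecting the sign flip of eigenvalues in the norm correspondence $\Psi_{G^!,G}$) are a genuinely metaplectic phenomenon that a centralizer-rank bookkeeping would miss entirely. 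Second, a combinatorial identity $\sum_{(M,s)\in\mathcal M(L)}(-1)^{r(M)}=(-1)^{r(L)}$ needed to recombine the alternating sums defining $D_{\tilde G}$ and $D_{G^!}$ after applying the compatibility of $\mathcal T^{\vee}$ with parabolic induction. Your final paragraph rightly identifies the genuine van Dijk/Casselman formulas as the technical input, but the structure built on top of them has to be this Jacquet-module commutation, not a pointwise sign comparison.
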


The proof is based on the idea of Hiraga. However, there are some new features in metaplectic group case which makes it (at least to the author) not clear whether Hiraga's result can be applied word for word. For example, when we considering relation of $\mathcal{T}^{\vee}_{G^!,\tilde{G}}$ with parabolic induction and Jacquet module, we need to include certain twists.  Besides, some steps of Hiraga's proof uses combinatorial properties of Weyl groups, these are replaced by arguments using explicit partitions in this note.
\paragraph{Organization}The structure of this article is as follows. In Section \ref{preliminary}, we  recall facts about endoscopy for $\Tilde{G}$ established by Li. In Section \ref{3}, we discuss  compatibility of endoscopic transfer with Jacquet functors. In the last section , we prove Theorem \ref{main}.
\section{Preliminaries}\label{preliminary}
\paragraph{}In this section, we fix our notations and recall facts about metaplectic groups and the theory of endoscopy for them following \cite{li2011transfert,li2019spectral,li2021stabilization}. We will work over a non-Archimedean local field $F$ such that $\text{char}(F) =0 $. All algebraic groups 
 $H$ are defined over $F$ and we use the same notation for their group of $F$-points.
\subsection{Metaplectic groups}

\paragraph{}Let $\operatorname{Sp}(2n)$ be the symplectic group associated to a $2n$-dimensional symplectic vector space. After fixing a Borel subgroup and a maximal torus inside, all the standard Levi subgroups of $\operatorname{Sp}(2n)$ takes the form 
$M=\prod_{i=1}^k\operatorname{GL}(n_i) \times \operatorname{Sp}(2m)$ where $n_i\in \mathbb{Z}_{>0}$, $m\in \mathbb{Z}_{\geq 0}$  and  $\sum_{i=1}^kn_i+m=n$. The set of standard Levi subgroup is in bijection with $(\{n_i\}_i,m)$ of the above form.

Let $\operatorname{SO}(2n+1)$ be the split odd orthogonal group over $F$. Then after fixing a Borel pair, all standard Levi subgroup takes the form $\prod_{i=1}^k\operatorname{GL}(n_i) \times \operatorname{SO}(2m+1)$.

We fix a non-trivial additive character $\psi:F \to \mathbb{C}^{\times}$, then the Weil representation give rise to a central extension 
$$1\to \mathbb{C}^{\times} \to \overline{\operatorname{Sp}}_{\psi}(2n) \to \operatorname{Sp}(2n) \to 1 .$$
It is well known that the derived subgroup of $\overline{\operatorname{Sp}}_{\psi}(2n)$ is an central extension by $\mu_2$ hence we may reduce $\overline{\operatorname{Sp}}_{\psi}(2n)$ to an extension
$$1\to \mu_8 \to\widetilde{\operatorname{Sp}}(2n) \to \operatorname{Sp}(2n) \to 1 .$$

For any subgroup $H \subset \operatorname{Sp}(2n)$, denote the inverse image of $H$ in $\widetilde{\operatorname{Sp}}(2n)$ by $\widetilde{H}$. One of the advantages of working with extensions by $\mu_8$, instead of working with the usual $\mu_2$ extensions, is that: for any standard Levi subgroup $M=  \prod_{i=1}^k\operatorname{GL}(n_i) \times \operatorname{Sp}(2m) \subset \operatorname{Sp(2n)}$, we have an isomorphism (depends on the fixed additive character $\psi$)
$$\tilde{M} \simeq   \prod_{i=1}^k\operatorname{GL}(n_i) \times \widetilde{\operatorname{Sp}}(2m).$$
We will say $\tilde{M}$ is a standard Levi subgroup of $\widetilde{\operatorname{Sp}}(2n)$. A group of the form $\prod_{i}\operatorname{GL}(n_i) \times \widetilde{\operatorname{Sp}}(2m)$ will be called a group of metaplectic type.

Let $H$ be a reductive group  or a group of meteplectic type. For any standard Levi subgroup $L \subset H$,  the normalized parabolic induction (resp. normalized Jacquet module) is denoted by  $i_L^{H}$ (resp. $r_{L}^H$). We view them as maps between the spaces of virtual (genuine) characters over relevant groups.

\subsection{Endoscopy}
\paragraph{} From now on, we fix an positive integer $n$ and set $G=\operatorname{Sp}(2n)$, $\tilde{G}=\widetilde{\operatorname{Sp}}(2n)$.
\paragraph{} Following \cite{li2011transfert} (see also \cite[Chapter 3]{li2021stabilization} for a summary), an elliptic endoscopic datum  of  $\tilde{G}$ is a triple $(G^{!},n^{\prime},n^{\prime\prime})$, where 
$$(n^{\prime},n^{\prime\prime})\in \mathbb{Z}_{\geq 0}^2, n^{\prime}+n^{\prime\prime}=n$$ 
$$G^{!}:=\operatorname{SO}(2n^{\prime}+1)\times \operatorname{SO}(2n^{\prime\prime}+1)$$
and $G^!$ is the endoscopic group attached to the datum $(G^!,n^{\prime},n^{\prime\prime})$.  If there is no ambiguity, an endoscopic group and endoscopic datum will both be denoted as $G^!$. We would like to mention that $(n^{\prime},n^{\prime\prime})$ and $(n^{\prime\prime},n^{\prime})$ will give rise to unequivalent endoscopic data which is a new feature of endoscopy in metaplectic group case.

For $\tilde{M} \simeq   \prod_{i=1}^k\operatorname{GL}(n_i) \times \widetilde{\operatorname{Sp}}(2m)$ a group of metaplectic type, an elliptic endoscopic group of $\tilde{M}$ is defined to be a group of the form $\prod_{i=1}^k\operatorname{GL}(n_i)\times M^!$ where $M^!$ is an elliptic endoscopic group of $\widetilde{\operatorname{Sp}}(2m).$

For $\tilde{G}$, the notion of stable conjugacy is defined in \cite[Section 5.2]{li2011transfert}.  The set of stable regular semisimple conjugacy classes in $\tilde{G}$ is denoted by  $\Sigma_{reg}(\tilde{G})$ and let $\Sigma_{reg}(G^!)$ be the set of stable strongly regular semisimple conjugacy classes in $G^!$. Then a correspondence between semisimple elements 
$$\Psi_{G^!,G}:  \Sigma_{reg}(G^!)\to\Sigma_{reg}(\tilde{G})$$
is defined as in \cite[Section 5.1]{li2011transfert}. This map can be made explicit in terms of eigenvalues. Suppose $\delta=(\delta^{\prime}, \delta^{\prime\prime} )\in G^!$ and let $((a_i^{\prime})^{\pm 1}_{i=1,...,n^{\prime}},1)$ (resp. $((a_i^{\prime\prime})^{\pm 1}_{i=1,...,n^{\prime\prime}},1)$) be the eigenvalues of $\delta^{\prime}$ (resp. $\delta^{\prime\prime}$). Then the eigenvalues of $\Psi_{G^!,G}(\delta)$ are $(a_i^{\prime})^{\pm 1}_{i=1,...,n^{\prime}}, (-a_i^{\prime\prime})^{\pm 1}_{i=1,...,n^{\prime\prime}}$.

Let $\Sigma_{G-reg}(G^!)\subset \Sigma_{reg}(G^{!})$ be the subset consisting of  $\delta$ such that $\Psi_{G^!,G}(\delta)$ is regular and let $\Gamma_{reg}(G)$ be the set of regular semisimple conjugacy classes in $G$. The transfer factor in this setting is a map 
$$\Delta_{G^!,G}(-,-): \Sigma_{G-reg}(G^!) \times \Gamma_{reg}(G) \to \mu_8$$
 defined in \cite[Section 5.3]{li2011transfert}. 
 
 The  transfer factor  satisfies parabolic descent \cite[Section 5.4]{li2011transfert}. Suppose  $\delta\in G^! $ and $\tilde{\gamma} \in \tilde{G}$ belong to Levi subgroups
 $$L=( \prod_{i}\operatorname{GL}(n_i^{\prime}) \times \operatorname{SO}(2m^{\prime}+1) )\times(  \prod_{i}\operatorname{GL}(n_i^{\prime\prime})\times \operatorname{SO}(2m^{\prime\prime}+1))$$
 $$\tilde{M}= \prod_{i=1}^k\operatorname{GL}(n_i) \times \widetilde{\operatorname{Sp}}(2m)$$
 respectively, where $n^{'}_i+n^{''}_i=n_i$. Then we can write 
 $$\delta= (\{\delta_i^{\prime}\}_i,\delta^{\prime})\times(\{\delta^{\prime\prime}_i\}_i,\delta^{\prime\prime})$$
 $$\tilde{\gamma}=(\{\tilde{\gamma}_i\},\tilde{\gamma}^\flat)$$
 where $\delta_i^{\prime}\in \operatorname{GL}(n_i^{\prime})$, $\delta^{\prime}\in \operatorname{SO}(2m^{\prime}+1)$ etc. Note that $L^{\flat}:=\operatorname{SO}(2m^{\prime}+1)  \times \operatorname{SO}(2m^{\prime\prime}+1)$ is an elliptic endoscopic group for $\tilde{G}^{\flat}:=\operatorname{Sp}(2m)$, the parabolic descent for transfer factor means we have
 $$\Delta_{G^!,G}(\delta,\tilde{\gamma})=\Delta_{L^{\flat},\tilde{G}^{\flat}}((\delta^{\prime},\delta^{\prime\prime}),\tilde{\gamma}^{\flat}).$$
 
 Let $\mathcal{T}^{\vee}_{G^!,\tilde{G}}$ be the transfer map for distributions defined as in \cite[Section 3.8]{li2021stabilization}. It follows from the results of  \cite{li2019spectral} that if $\Theta$ is a stable virtual character of $G^!$ then $\mathcal{T}^{\vee}_{G^!,\tilde{G}}(\Theta)$ is a virtual genuine character of $\tilde{G}$. In this case, we have
$$\mathcal{T}^{\vee}_{G^!,\tilde{G}}(\Theta)(\tilde{\gamma})= \sum_{\substack{\delta \in \Sigma_{\text{reg}}(G^!) \\ \Psi_{G^!,\tilde{G}}(\delta)=\gamma }} \Delta_{G^!,\Tilde{G}}(\delta,\Tilde{\gamma})\Theta(\delta)$$
holds for $\forall \gamma\in \Gamma_{reg}(G)$ and $\tilde{\gamma}$ can be any inverse image of $\gamma$.

The formulations above can be extended to the case when $\tilde{G}$ is replace by a group of metaplectic
type by treating $\operatorname{GL}$ factors and the metaplectic factor separately. Suppose $G^!$ is an endoscopic datum of $\tilde{G}$ then by definition $G^!$ is an elliptic endoscopic datum of some standard Levi $\tilde{M}$ of $\tilde{G}$. Then we can extend the definition of $\Psi, \Delta$ to the pair $(G^!,\tilde{G})$ via the inclusion $\tilde{M} \subset \tilde{G}$ and set $$\mathcal{T}^{\vee}_{G^!,\tilde{G}}=i_{\tilde{M}}^{\tilde{G}}\circ \mathcal{T}^{\vee}_{G^!,\tilde{M}}$$
all the formulations above extend to this case as well.
\section{Jacquet module and endoscopy}\label{3}
\paragraph{} In this section, we consider the relation between Jacquet module and endoscopic transfer for metaplectic groups. 
\paragraph{}Let $G^!=\operatorname{SO}(2n^{\prime}+1)\times \operatorname{SO}(2n^{\prime\prime}+1)$ be an elliptic endoscopic datum of $\Tilde{G}$ and let  $M=  \prod_{i=1}^k\operatorname{GL}(n_i) \times \operatorname{Sp}(2m)$ be a standard Levi subgroup  of $G$. We define $\mathcal{E}(\tilde{M},G^!)$ as the set of all sequences $\{(n^{'}_i,n^{''}_i)\}_{i=1,...,k}$ satisfying:
\begin{itemize}
    \item For $\forall 1\leq i\leq k$, $n^{'}_i,n^{''}_i \in \mathbb{Z}_{\geq 0}$ and  $n^{'}_i+n^{''}_i=n_i$
    \item $\sum n_i^{\prime} \leq n^{\prime}$ and $\sum n_i^{\prime \prime}\leq n^{\prime \prime}$.
\end{itemize}

For any $s \in \mathcal{E}({\tilde{M},G^!})$, we set
$$M_s:=(\prod_{i}\operatorname{GL}(n_i^{\prime}) \times \prod_{i}\operatorname{GL}(n_i^{\prime\prime})) \times \operatorname{Sp}(2m)$$
$$M_{s}^! := ( \prod_{i}\operatorname{GL}(n_i^{\prime}) \times \operatorname{SO}(2m^{\prime}+1) )\times(  \prod_{i}\operatorname{GL}(n_i^{\prime\prime})\times \operatorname{SO}(2m^{\prime\prime}+1))$$  
where $m^{\prime}=n^{\prime}-\sum n_i^{\prime}$ and $m^{\prime\prime}=n^{\prime\prime}-\sum n_i^{\prime\prime}$ 
and we view $M_s^!$ as a standard Levi subgroup of $G^{!}$ via inclusions 
$$\prod_{i}\operatorname{GL}(n_i^{\prime}) \times \operatorname{SO}(2m^{\prime}+1) \hookrightarrow \operatorname{SO}(2n^{\prime}+1)$$
$$\prod_{i}\operatorname{GL}(n_i^{\prime\prime})\times \operatorname{SO}(2m^{\prime\prime}+1) \hookrightarrow \operatorname{SO}(2n^{\prime\prime}+1).$$
Here the product respects the order of subscripts and subscripts $i$ with $n_i^{\prime}=0$ (resp. $n_i^{\prime\prime}=0$ ) are omitted. Note that $M_s^!$ is an elliptic endoscopic group of $\tilde{M}_s$ and $M_s$ is a standard Levi subgroup of $M$. Hence we can also view $M_s^!$ as an endoscopic group (not necessarily elliptic) of $\Tilde{M}$.

Let $z_s \in M_s^!$ be the central element whose projections to $\operatorname{SO}$ factors, $\operatorname{GL}(n_i^{\prime})$ are $1$ and projections to $\operatorname{GL}(n_i^{\prime\prime})$ are $-1$. Multiplication by $z_s$ induces a map on distributions over $M_s^!$ which we denote by $[z_s]$. Since $z_s$ has order $2$, $[z_s]$ is an involution.
With these preparations, we can now state the relation between $\mathcal{T}^{\vee}$ and Jacquet module, which is an analogue of \cite[Proposition 5.6]{Hiraga}.

\begin{theorem}\label{jacquet}
    For any stable virtual character $\Theta$, we have 
    $$r_{\Tilde{M}}^{\Tilde{G}}\circ \mathcal{T}^{\vee}_{G^!,\Tilde{G}}(\Theta)=\sum_{s\in \mathcal{E}(M,G^!)} \mathcal{T}^{\vee}_{M_s^!,\tilde{M}} \circ [z_s] \circ  r^{G^!}_{M_s^!}(\Theta)$$
\end{theorem}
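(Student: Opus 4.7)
The plan is to verify the identity by evaluating both sides at a generic regular semisimple $\tilde{\gamma} \in \tilde{M}$. Both sides are genuine virtual characters of $\tilde{M}$, so they agree if and only if they take the same value at every $\tilde{\gamma}$ whose image $\gamma \in M$ is $G$-regular semisimple. Write $\tilde{\gamma} = (\{\tilde{\gamma}_i\}, \tilde{\gamma}^\flat)$ using the isomorphism $\tilde{M} \simeq \prod_i \operatorname{GL}(n_i) \times \widetilde{\operatorname{Sp}}(2m)$, so that the block structure on $\tilde{M}$ is available throughout.

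First I expand the left-hand side using the character formula for Jacquet modules (van Dijk / Casselman): the value $(r_{\tilde{M}}^{\tilde{G}} \mathcal{T}^{\vee}_{G^!, \tilde{G}}(\Theta))(\tilde{\gamma})$ is a sum, weighted by the appropriate discriminant ratio, over Weyl representatives $w \in W(\tilde{G}, \tilde{M})$ such that $w \cdot \tilde{\gamma}$ lies in $\tilde{M}$, of $\mathcal{T}^{\vee}_{G^!, \tilde{G}}(\Theta)(w \cdot \tilde{\gamma})$. Substituting the transfer-character formula from Section \ref{preliminary} turns each inner value into a sum over stable regular conjugacy classes $\delta \in G^!$ with $\Psi_{G^!, G}(\delta)$ equal to the image of $w \cdot \tilde{\gamma}$ in $G$, weighted by $\Delta_{G^!, G}(\delta, w \cdot \tilde{\gamma}) \, \Theta(\delta)$.

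The core step is to reorganize this double sum by the minimal standard Levi $M_s^! \subset G^!$ that contains (a representative of) $\delta$. Because Li's eigenvalue description of $\Psi_{G^!, G}$ sends $n''$-eigenvalues to their negatives, the condition $\Psi(\delta) = w \cdot \gamma$ forces, for each index $i$, a partition of the eigenvalues of $\gamma_i \in \operatorname{GL}(n_i)$ into an $n_i'$-part (mapping to the $\operatorname{SO}(2n'+1)$ factor) and an $n_i''$-part (mapping with a sign flip to the $\operatorname{SO}(2n''+1)$ factor). The resulting block sizes $(n_i', n_i'')$ are exactly the indexing datum $s \in \mathcal{E}(\tilde{M}, G^!)$, and the negation imposed on the $\operatorname{GL}(n_i'')$-block of $\delta$ is precisely what is undone by $[z_s]$ when $\delta$ is re-expressed as a regular element of $M_s^!$ attached to $\gamma$ via $\Psi_{M_s^!, M_s}$. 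The parabolic descent of transfer factors recalled in Section \ref{preliminary} then rewrites $\Delta_{G^!, G}(\delta, w \cdot \tilde{\gamma})$ as $\Delta_{M_s^!, \tilde{M}_s}$ on the corresponding block-components, with the $\operatorname{GL}$-pieces contributing trivially to the transfer factor.

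To conclude I apply Casselman's formula on the $G^!$-side to convert the residual $\Theta$-values on $M_s^!$-elements into $r_{M_s^!}^{G^!}(\Theta)$-values, and recognise the remaining outer Weyl-conjugation sum as the character-theoretic expansion of the induction $i_{\tilde{M}_s}^{\tilde{M}}$ packaged inside $\mathcal{T}^{\vee}_{M_s^!, \tilde{M}}$; the two discriminant ratios produced by the two applications of Casselman's formula cancel against the normalizations of induction and transfer factors. The principal obstacle, flagged by the author in the introduction, will be the combinatorial bookkeeping: the asymmetry between the $n'$ and $n''$ sides prevents a clean Weyl-group identity of Hiraga's type, so the bijection between Weyl representatives $w$ on the left and pairs (a choice of $s \in \mathcal{E}(\tilde{M}, G^!)$ together with a Weyl representative of $W(\tilde{M}, \tilde{M}_s)$) on the right has to be established directly through the explicit eigenvalue partitions, keeping track of the $z_s$-twist throughout.
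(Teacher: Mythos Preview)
Your overall strategy---compare both sides as genuine virtual characters at $G$-regular elements of $\tilde M$, sort the stable classes $\delta\in G^!$ according to how the eigenvalues of each block $\gamma_i$ split between the two $\operatorname{SO}$-factors (this is precisely the paper's Lemma~\ref{bijection}), and then invoke parabolic descent of transfer factors together with the $z_s$-twist---is the paper's plan.

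The gap is your first step. The formula you quote for $(r_{\tilde M}^{\tilde G}\Pi)(\tilde\gamma)$ as a discriminant-weighted sum of $\Pi(w\tilde\gamma)$ over Weyl representatives is the van~Dijk formula for the character of an \emph{induced} representation, not of a Jacquet module; no such identity holds for $r_{\tilde M}^{\tilde G}$ (take $\Pi$ supercuspidal: your right-hand side is generically nonzero while the left-hand side vanishes). Casselman's theorem is different and more restricted: for $a\in A_M^-$ and $n\gg 0$ one has simply $(r_{\tilde M}^{\tilde G}\Pi)(a^n\tilde\gamma)=\Pi(a^n\tilde\gamma)$, with no Weyl sum and no discriminant ratio. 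This asymptotic form is what the paper uses (citing \cite{luo2017howe} for the covering-group case), and it is applied a second time on the $G^!$-side to replace $r^{G^!}_{M_s^!}(\Theta)(\delta_s)$ by $\Theta(\delta_s)$, the element $\delta_s$ being forced into the negative chamber by the constraint $\Psi_{M_s^!,M}(\delta_s\cdot z_s)=a^n\gamma$. Once you use the correct formula, the Weyl-group bookkeeping you anticipate at the end simply evaporates: both sides become finite sums over stable classes mapping to $a^n\gamma$, and Lemma~\ref{bijection} together with parabolic descent of $\Delta$ matches them term by term. In particular there is no residual outer Weyl sum to identify with the induction hidden inside $\mathcal{T}^\vee_{M_s^!,\tilde M}$, because that transfer is evaluated directly via its character formula over $\Psi_{M_s^!,M}$.
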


Here the right hand side make sense because $r^{G^!}_{M_s^!}$ maps stable virtual characters to stable virtual characters (see eg. \cite[Lemma 3.3]{Hiraga}).
\begin{lemma}\label{bijection}
    For $\gamma \in M_{G-{\text{reg}}}$, the following sets are in bijection:
    \begin{itemize}
        \item $\{\delta \in \Sigma_{\text{reg}}(G^!)| \Psi_{G^!,G}(\delta)=\gamma \}$
        \item $\{(s,\delta_s )| s\in \mathcal{E}(M,G^!) \text{, } \delta_{s}\in \Sigma_{\text{reg}}(M_s^!), \Psi_{M^!_s,M}(\delta_s \cdot z_s)= \gamma\}$
    \end{itemize}
\end{lemma}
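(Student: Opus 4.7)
The plan is to construct explicit mutually inverse maps between the two sets using the eigenvalue description of $\Psi_{G^!,G}$ recalled in Section~\ref{preliminary}: for $\delta=(\delta^{\prime},\delta^{\prime\prime})\in G^!$, the $\operatorname{Sp}(2n)$-eigenvalues of $\Psi_{G^!,G}(\delta)$ are exactly the eigenvalues of $\delta^{\prime}$ together with those of $-\delta^{\prime\prime}$. Since $\gamma\in M_{G-\text{reg}}$, all these eigenvalues are distinct (and in particular disjoint from $\{\pm 1\}$), which makes the partitions below canonical.

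Forward direction: given a stable regular $\delta=(\delta^{\prime},\delta^{\prime\prime})\in G^!$ with $\Psi_{G^!,G}(\delta)=\gamma$, decompose $\gamma=(\{\gamma_i\}_i,\gamma^\flat)$ according to $M$. Each $\gamma_i\in\operatorname{GL}(n_i)\subset\operatorname{Sp}(2n_i)$ contributes $n_i$ inverse pairs $\{\lambda_{i,j}^{\pm 1}\}$; by $G$-regularity each such pair lies entirely in the eigenvalue set of either $\delta^{\prime}$ or $-\delta^{\prime\prime}$. Let $n_i^{\prime}$ (resp.\ $n_i^{\prime\prime}$) count the pairs of the first (resp.\ second) type; the constraints $\sum n_i^{\prime}\le n^{\prime}$ and $\sum n_i^{\prime\prime}\le n^{\prime\prime}$ follow from counting eigenvalues on each side, so $s\in\mathcal{E}(M,G^!)$. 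The selected eigenvalues determine $\delta_{s,i}^{\prime}\in\operatorname{GL}(n_i^{\prime})$, and I build $\delta_{s,i}^{\prime\prime}\in\operatorname{GL}(n_i^{\prime\prime})$ so that $-\delta_{s,i}^{\prime\prime}$ carries the eigenvalues inherited from $-\delta^{\prime\prime}$. The remaining eigenvalues of $\delta^{\prime}$ (together with $1$) define $\delta_s^{\flat\prime}\in\operatorname{SO}(2m^{\prime}+1)$ and analogously $\delta_s^{\flat\prime\prime}\in\operatorname{SO}(2m^{\prime\prime}+1)$. Assembling yields $\delta_s\in M_s^!$, and the identity $\Psi_{M_s^!,M}(\delta_s z_s)=\gamma$ is a direct eigenvalue check: the $-1$ placed by $z_s$ on each $\operatorname{GL}(n_i^{\prime\prime})$-factor is precisely what cancels the sign flip used in the construction of $\delta_{s,i}^{\prime\prime}$.

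Reverse direction: given $(s,\delta_s)$ with $\Psi_{M_s^!,M}(\delta_s z_s)=\gamma$, assemble $(\delta_{s,i}^{\prime})_i$ and $\delta_s^{\flat\prime}$ into $\delta^{\prime}\in\operatorname{SO}(2n^{\prime}+1)$ through the Levi inclusion recorded in Section~\ref{3}, and similarly build $\delta^{\prime\prime}$; set $\delta=(\delta^{\prime},\delta^{\prime\prime})\in G^!$. The eigenvalue formula for $\Psi_{G^!,G}$ gives $\Psi_{G^!,G}(\delta)=\gamma$ at once, and $\delta\in\Sigma_{\text{reg}}(G^!)$ because all $\operatorname{Sp}(2n)$-eigenvalues of $\gamma$ are distinct by hypothesis. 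The two constructions are mutually inverse by inspection, since in both directions the content of the data is nothing but the partition of $\gamma$'s eigenvalues into the $\delta^{\prime}$- and $-\delta^{\prime\prime}$-contributions.

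The only real obstacle is sign/bookkeeping hygiene: one must verify that the $-1$ twist carried by $z_s$ on each $\operatorname{GL}(n_i^{\prime\prime})$-factor lines up precisely with the sign flip present in $\Psi_{-,-}$ on the double-primed side, and that the regular semisimple stable classes of the constituent $\operatorname{GL}$- and odd-split $\operatorname{SO}$-factors are determined by their eigenvalue data, so that all the assemblies are well-defined on stable classes. Both points are automatic from the conventions of Section~\ref{preliminary}, which is why the construction goes through cleanly.
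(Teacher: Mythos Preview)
Your argument is correct and follows essentially the same eigenvalue-partition approach as the paper's proof: both define $s$ by counting, for each $\operatorname{GL}(n_i)$-block of $\gamma$, how many eigenvalue pairs come from $\delta'$ versus from $-\delta''$. The only difference is cosmetic: the paper sketches only the forward map and invokes \cite[Proposition~3.4.9]{li2021stabilization} for the verification that $(s,\delta_s)$ lands in the second set, whereas you spell out both directions and the $z_s$ sign bookkeeping directly.
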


\begin{proof}
    For $\delta$ in the first set, let $\delta^{\prime}$ (resp. $\delta^{\prime\prime}$) be its projection to $\operatorname{SO}(2n^{\prime}+1)$ (resp. $\operatorname{SO}(2n^{\prime\prime}+1)$ ). Let $\gamma_i$ be the projection of $\gamma$ to $\operatorname{GL}(n_i)$. For each $1 \leq i\leq k$, let $2n_i^{\prime}$ be the number of eigenvalues of $\delta^{\prime}$ which is also an eigenvalue of $\gamma_i$.  Similarly, we define $2n_i^{\prime\prime}$ to be the number of eigenvalues of $\delta^{\prime\prime}$ which is also an eigenvalue of $-\gamma_i$. Then since $\gamma$ is regular, $n_i^{\prime}, n^{\prime\prime}_{i}$ are well-defined integers and we have $n_i^{\prime}+n_i^{\prime\prime}=n_i$. Thus we obtain an element  in $ \mathcal{E}(M,G^!)$  given by $s=\{(n_i^{\prime},n_i^{\prime\prime})\}_{i=1,...,k}$ and $\delta$ can be conjugated to some element $\delta_s\in M_s^!$. By \cite[Proposition 3.4.9]{li2021stabilization}, $(s,\delta_s)$ belongs to the second set.
\end{proof}

\begin{proof}[Proof of Theorem \ref{jacquet}]
    Let $A_M$ be the center of $M$ and let $\Delta_G$ (resp. $\Delta_M$) be the set of simple roots of $G$ (resp. $M$) for our fixed choice of Borel pair. Define $$A_M^-:=\{ t\in A_M \ |\  \forall \alpha\in \Delta_G-\Delta_M \text{ 
 , }|\alpha(t)|< 1\}.$$ Then Casselman's character formula, where a proof for covering group setting can be found in \cite{luo2017howe}, implies for any $\gamma\in M_{G-\text{reg}}$, $a\in A_M^{-}$  and $n\gg 0$  we have 
 
    \begin{align*}
        r_{\Tilde{M}}^{\Tilde{G}}\circ \mathcal{T}^{\vee}_{G^!,\tilde{G}}(\Theta)(a^n\Tilde{\gamma})&= \mathcal{T}^{\vee}_{G^!,\tilde{G}}(\Theta)(a^n\Tilde{\gamma})\\
        &= \sum_{\substack{\delta \in \Sigma_{\text{reg}}(G^!) \\ \Psi_{G^!,G}(\delta)=a^n\gamma }}\Delta_{G^!,\Tilde{G}}(\delta,a^n\Tilde{\gamma})\Theta(\delta)
    \end{align*}
     where $\Tilde{\gamma}$ is an inverse image of $\gamma$ in $\tilde{G}$.
    On the other hand, for each $s \in \mathcal{E}(M,G^!)$, we have 
    \begin{align*}
        \mathcal{T}^{\vee}_{M_s^!,\Tilde{M}} \circ [z_s] \circ  r^{G^!}_{M_s^!}(\Theta)(a^n\Tilde{\gamma})&=\sum_{\substack{\delta_{s}\in \Sigma_{\text{reg}}(M_s^!)\\ \Psi_{M_s^!,M}(\delta_s )= a^n\gamma}} \Delta_{M_s^!,\Tilde{M}}(\delta_s,a^n\Tilde{\gamma})[z_s]\circ r^{G^!}_{M_s^!}(\Theta)(\delta_s) \\
        &= \sum_{\substack{\delta_{s}\in \Sigma_{\text{reg}}(M_s^!)\\ \Psi_{M_s^!,M}(\delta_s\cdot z_s)= a^n\gamma}} \Delta_{M_s^!,\Tilde{M}}(\delta_s \cdot z_s,a^n\Tilde{\gamma})\Theta(\delta_s) 
    \end{align*}
    where the second equality is obtained via replacing $\delta_s$ by $\delta_s \cdot z_s$. Note that multiplication by $z_s$ only affects $\operatorname{GL}$ factors, hence parabolic descent for transfer factors  implies 
    $$ \Delta_{M_s^!,\Tilde{M}}(\delta_s \cdot z_s,a^n\Tilde{\gamma})= \Delta_{G^!,\Tilde{G}}(\delta_s,a^n\Tilde{\gamma}).$$ 
    Take summation over $s\in \mathcal{E}(M,G^!)$, then by Lemma \ref{bijection} we have 
    $$r_{\Tilde{M}}^{\Tilde{G}}\circ \mathcal{T}^{\vee}_{G^!,\Tilde{G}}(\Theta)(a^n\tilde{\gamma})=\sum_{s\in \mathcal{E}(M,G^!)} \mathcal{T}^{\vee}_{M_s^!,\tilde{M}} \circ [z_s] \circ  r^{G^!}_{M_s^!}(\Theta)(a^n\tilde{\gamma})$$
    holds for any $\gamma\in M_{G-\text{reg}}$, $a\in A_M^{-}$  and $n\gg 0$. Hence the virtual characters on two sides are equal.
\end{proof}

\paragraph{}We end this section by a corollary of Theorem \ref{jacquet} about partial Jacquet modules. It will not be used in the following,  we include it here mainly for future reference.

  Let $M=\operatorname{GL}(d)\times \operatorname{Sp}(2m)$, where $m=n-d$,  be a maximal Levi subgroup of $G$. For a virtual  genuine character $\Theta$ of $\tilde{G}$, we can write
$$r^{\tilde{G}}_{\tilde{M}}(\Theta)=\sum_{i\in I}a_i\Theta_{\pi_i}$$
for some indexing set $I$, some irreducible  genuine representations  $\pi_i$  of $\tilde{M}$ and complex numbers $a_i$.
Each $\pi_i$ takes the form $\rho_i \boxtimes \pi^{\prime}_i$ where $\rho$ is an irreducible representation of $\operatorname{GL}(d)$ and $\pi_i^{\prime}$ is an irreducible genuine representation of $\widetilde{\operatorname{Sp}}(2m)$.

Let $\rho$ be a unitary irreducible supercuspidal representation of $\operatorname{GL}(d)$. For a real number $x$, let $|\cdot|^x$ be the character of $\operatorname{GL}(d)$ given by $g\mapsto |\operatorname{det}(g)|^x$. Then the partial Jacquet module with respect to $\rho,x$ is defined as 
$$r_{\rho,x}(\Theta):= \sum_{\substack{i \in I \\ \rho_{i} \simeq \rho|\cdot|^x} }a_i\Theta_{\pi_i^{\prime}}$$
which is a virtual genuine character of $\widetilde{\operatorname{Sp}}(2m)$.

We would like to apply $r_{\rho,x}$ to $\mathcal{T}^{\vee}_{G^!,\Tilde{G}}(\Theta)$ where $\Theta$ is a stable virtual character of $G^!$. In the current setting, we have 
$$\mathcal{E}(M,G^!)=\{(d^{\prime},d^{\prime\prime})\ | \ d^{\prime},d^{\prime\prime} \in \mathbb{Z}_{\geq0}\text{ , }d^{\prime}+d^{\prime\prime}=d \text{ , } d^{\prime}\leq n^{\prime}, d^{\prime\prime} \leq n^{\prime\prime}\}$$ and Levi subgroups appear on right hand side of Theorem \ref{jacquet} takes the form 
$$L=(\operatorname{GL}(d^{\prime})\times \operatorname{SO}(2m^{\prime}+1))\times(\operatorname{GL}(d^{\prime\prime})\times \operatorname{SO}(2m^{\prime\prime}+1)).$$ 
The endoscopic transfer $\mathcal{T}^{\vee}_{L,\tilde{M}}$ map then factors as 
$$L\xrightarrow[endoscopy]{elliptic}\operatorname{GL}(d^{\prime})\times\operatorname{GL}(d^{\prime\prime})\times \widetilde{\operatorname{Sp}}(2m) \xrightarrow[induction]{parabolic} \tilde{M}.$$
Because $\rho$ is a supercuspidal representation, $\mathcal{T}^{\vee}_{M_s^!,\tilde{M}} \circ [z_s] \circ  r^{G^!}_{M_s^!}(\Theta)$ has no components isomorphic to $\rho|\cdot|^x\boxtimes \pi^{\prime}$ unless $s=(d,0) \text{ or } (0,d)$. 

Now let $\phi$ be a discrete L-parameter of $\operatorname{SO}(2n+1)$. By definition, $\phi$ is a $2n$-dimensional symplectic representation of  the Weil-Deligne group and it can be written as a multiplicity free direct sum 
$$\phi=\oplus \phi_i \boxtimes S_{a_i-1}$$
where each $\phi_i$ is an irreducible unitary representation of Weil group and $S_{a_i-1}$ is the $a$-dimensional irreducible representation of $\operatorname{SL}(2,\mathbb{C})$. By local Langlands correspondence for $\operatorname{GL}$, each $\phi_i$ corresponds to an irreducible supercuspidal representation $\rho_i$. The set of Jordan blocks of $\phi$ is defined as 
$$Jord(\phi):=\{(\rho_i,a_i)| \phi_i\boxtimes S_{a-1}\subset \phi\}.$$
Suppose $\phi$ factors through $G^!$, then we can write $\phi= (\phi^{\prime},\phi^{\prime\prime})$. We take $(\rho,a)\in Jord(\phi)$ and set $x=\frac{a-1}{2}$.  Let   $\Theta_{\phi^{\prime}}$ (resp. $\Theta_{\phi^{\prime}}$)  be the stable virtual character attached to $\phi^{\prime}$(resp. $\phi^{\prime\prime}$). If $r_{\rho,x}(\Theta_{\phi^{\prime}})\neq 0$ (resp. $r_{\rho,x}(\Theta_{\phi^{\prime\prime}})\neq 0$) then $(\rho,a)$ is contained in $Jord(\phi^{\prime})$ (resp. $Jord(\phi^{\prime\prime})$), see eg. \cite[Lemma 7.2]{xu2017cuspidal}. Since $\phi$ is discrete, at most one case can happen.

In summary, we can obtain the following:
\begin{corollary}
    Let $\phi=(\phi^{\prime},\phi^{\prime\prime})$ be a discrete L-parameter of $SO(2n+1)$ that factors through $G^!$ and let $\Theta_{\phi}$ be the stable virtual character of $G^!$ attached to $\phi$. We take a standard Levi $M^!\subset G^!$ defined as 
    \begin{align*}
        M^!:=
        \begin{cases}
           M_{(d,0)}^!& \text{ if } (\rho,a) \in Jord(\phi^{\prime}) \\
            M_{(0,d)}^! & \text{ if } (\rho,a) \in Jord(\phi^{\prime\prime}).
        \end{cases}
    \end{align*}
    Write $\tilde{G}_{-}=\widetilde{Sp}(2m)$ and let $G_{-}^!$ be the $\operatorname{SO}$ part of $M^!$ then we have
    $$\alpha r_{\rho,x} \circ \mathcal{T}^{\vee}_{G,\tilde{G}}(\Theta_{\phi})=\mathcal{T}^{\vee}_{G^!_{-},\tilde{G}_{-}} \circ r_{\rho,x}(\Theta_\phi),$$
    where $\alpha$ is a sign defined as  
    \begin{align*}
        \alpha=
        \begin{cases}
            1& \text{if  } (\rho,a) \in Jord(\phi^{\prime}) \\
            \omega_{\rho}(-1) & \text{if  } (\rho,a) \in Jord(\phi^{\prime\prime})\\
        \end{cases}
    \end{align*}
    with  $ \omega_{\rho}$ stands for the central character of $ \rho$.
\end{corollary}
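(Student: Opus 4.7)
The plan is to deduce the corollary by first applying Theorem \ref{jacquet} with $M=\operatorname{GL}(d)\times\operatorname{Sp}(2m)$ to rewrite $r^{\tilde{G}}_{\tilde{M}}\circ \mathcal{T}^{\vee}_{G^!,\tilde{G}}(\Theta_\phi)$, and then composing with the projection that produces $r_{\rho,x}$. As spelled out in the discussion preceding the corollary, $r_{\rho,x}$ annihilates every term in the sum over $\mathcal{E}(M,G^!)$ for which the $\operatorname{GL}(d)$-slot gets further split: in that case $\mathcal{T}^{\vee}_{M_s^!,\tilde{M}}$ factors through parabolic induction from a proper Levi of $\operatorname{GL}(d)$, whose image contains no components of the form $\rho|\cdot|^x\boxtimes \pi^{\prime}$ since $\rho$ is supercuspidal. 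Hence only $s=(d,0)$ and $s=(0,d)$ can contribute.

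For these two indices, $r^{G^!}_{M_s^!}(\Theta_\phi)$ factors along the two $\operatorname{SO}$-blocks of $G^!$: it is $r^{\operatorname{SO}(2n^{\prime}+1)}_{\operatorname{GL}(d)\times \operatorname{SO}(2(n^{\prime}-d)+1)}(\Theta_{\phi^{\prime}})\boxtimes \Theta_{\phi^{\prime\prime}}$ when $s=(d,0)$, and analogously with the two factors swapped when $s=(0,d)$. Extracting the $\rho|\cdot|^x$-isotypic component of the $\operatorname{GL}(d)$-slot and invoking \cite[Lemma 7.2]{xu2017cuspidal} forces vanishing unless $(\rho,a)$ appears in the Jordan block set of the corresponding discrete $L$-parameter. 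Since $\phi$ is discrete, $(\rho,a)$ lies in exactly one of $Jord(\phi^{\prime})$ and $Jord(\phi^{\prime\prime})$, so precisely one index $s$ survives, namely the one prescribed by the case distinction in the statement. For that $s$ one has $\tilde{M}_s=\tilde{M}$, and $\mathcal{T}^{\vee}_{M_s^!,\tilde{M}}$ acts as the identity on the $\operatorname{GL}(d)$-factor while performing the elliptic transfer $\mathcal{T}^{\vee}_{G_-^!,\tilde{G}_-}$ on the remaining $\operatorname{SO}$-piece, which matches the right-hand side of the claimed identity up to the overall scalar coming from $[z_s]$.

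It remains to read off this scalar. For $s=(d,0)$ the element $z_s$ equals the identity on the only non-trivial $\operatorname{GL}$-factor, so $[z_s]$ acts as the identity and $\alpha=1$. For $s=(0,d)$ the element $z_s$ equals $-I\in \operatorname{GL}(d)$; twisting the character of the irreducible component $\rho|\cdot|^x$ by this central element multiplies it by $\omega_{\rho}(-1)\cdot |\det(-I)|^{x}=\omega_{\rho}(-1)$, since $|\det(-I)|=1$. This yields the sign $\alpha$ claimed in the statement. The main obstacle --- modest as it is --- is this bookkeeping of the twist $[z_s]$ together with the careful invocation of \cite[Lemma 7.2]{xu2017cuspidal}; the rest of the argument is a direct specialization of Theorem \ref{jacquet}.
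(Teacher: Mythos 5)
Your proposal is correct and follows essentially the same route as the paper, which proves this corollary by the discussion immediately preceding it: specialize Theorem \ref{jacquet} to the maximal Levi $M=\operatorname{GL}(d)\times\operatorname{Sp}(2m)$, use supercuspidality of $\rho$ to discard all $s$ except $(d,0)$ and $(0,d)$, use \cite[Lemma 7.2]{xu2017cuspidal} together with discreteness of $\phi$ to keep exactly one of these, and read off the scalar from $[z_s]$. Your explicit computation of $\alpha$ as the central character of $\rho|\cdot|^x$ at $-I$ is a welcome detail that the paper leaves implicit.
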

\section{Compatibility}

\paragraph{}Now we start to prove  Theorem \ref{main}. First we consider the case where $G^!$ is an elliptic endoscopic group of $G$.

For a split reductive group $H$ with a fixed Borel pair, let $\mathcal{L}(H)$ be the set of standard Levi subgroups of $H$. Recall from \cite{aubert1995dualite} that the  Aubert-Zelevinski involution is defined as 
$$D_{H}:=\sum_{L\in \mathcal{L}(H)}(-1)^{r(L)}i^{H}_{L}\circ r^{H}_L$$
where $r(-)$ stands for semisimple rank. For $\tilde{G}$, its Aubert-Zelevinski involution is defined similarly as
$${D}_{\Tilde{G}}: =\sum_{M\in \mathcal{L}(G)}(-1)^{r(M)}i^{\Tilde{G}}_{\Tilde{M}}\circ r_{\Tilde{M}}^{\Tilde{G}}.$$

Let $G^!=\operatorname{SO}(2n^{\prime}+1)\times \operatorname{SO}(2n^{\prime\prime}+1)$ be  an elliptic endoscopic group of $\tilde{G}$.  For $L \in \mathcal{L}(G^!)$, define 
$$\mathcal{M}(L):=\{(M,s)| M\in \mathcal{L}(G)\text{, } s\in \mathcal{E}(M,G^!), L=M_s^!\}$$
By \cite[Section 3.8]{li2021stabilization}, we know 

$$\mathcal{T}^{\vee}_{G^!,\tilde{G}} \circ i_{M_s^!}^{G^!} \circ [z_s] = i_{\Tilde{M}}^{\Tilde{G}}\circ \mathcal{T}^{\vee}_{M_s^!,\tilde{M}} $$
holds for any $M\in \mathcal{L}(G)$, $s\in \mathcal{E}(M,G^!)$. Then we can compute
\begin{align*}
    {D}_{\Tilde{G}}\circ \mathcal{T}^{\vee}_{G^!,\tilde{G}}&=\sum_{M\in \mathcal{L}(G)}(-1)^{r(M)}i^{\Tilde{G}}_{\Tilde{M}}\circ r_{\Tilde{M}}^{\Tilde{G}} \circ \mathcal{T}^{\vee}_{G^!,\tilde{G}}\\
    &=\sum_{M\in \mathcal{L}(G)}(-1)^{r(M)} i^{\Tilde{G}}_{\Tilde{M}}(\sum_{s\in \mathcal{E}(M,G^!)} \mathcal{T}^{\vee}_{M_s^!,\tilde{M}}\circ [z_s] \circ  r^{G^!}_{M_s^!})\\
    &= \sum_{M\in \mathcal{L}(G)}(-1)^{r(M)} \sum_{s\in \mathcal{E}(M,G^!)} \mathcal{T}^{\vee}_{G^!,\tilde{G}}\circ i^{G^!}_{M_s^!} \circ  r^{G^!}_{M_s^!} \\
    &= \mathcal{T}^{\vee}_{G^!,\tilde{G}}(\sum_{L\in \mathcal{L}(G^!)}(\sum_{(M,s)\in \mathcal{M}(L)}(-1)^{r(M)} i^{G^!}_{L}\circ r^{G^!}_L))
\end{align*}
Comparing with the definition of $D_{G^!}$, we see that it is enough to show $$\sum_{(M,s)\in \mathcal{M}(L)}(-1)^{r(M)}=(-1)^{r(L)}$$ 
holds for all $L\in \mathcal{L}(G^!)$. Suppose $L$ takes the form  
$$L=( \prod_{i=1}^{k^{\prime}}\operatorname{GL}(n_i^{\prime}) \times \operatorname{SO}(2m^{\prime}+1) )\times(  \prod_{i=1}^{k^{\prime \prime}}\operatorname{GL}(n_i^{\prime\prime})\times \operatorname{SO}(2m^{\prime\prime}+1)).$$
Here, if $k^{\prime}$ or $k^{\prime\prime}$ is $0$ then the corresponding product is understood as taken over  empty set, similar conventions apply to later discussion. Write  $I^{\prime}=(n_1^{\prime},...,n_{k^{\prime}}^{\prime} )$ and define $I^{\prime\prime}$ similarly. 

\begin{lemma}
    The set $\mathcal{M}(L)$ is in bijection with the set of triple $(k,\Bar{I}^{\prime},\Bar{I}^{\prime\prime})$ satisfying:
\begin{itemize}
    \item $k\in \mathbb{Z}$ and $\operatorname{max}(k^{\prime},k^{\prime\prime})\leq k$
    \item $\Bar{I}^{\prime}=(\Bar{n}^{\prime}_1,....,\Bar{n}^{\prime}_k)$ and there exists a subsequence $1\leq i_1<...<i_{k^{\prime}}\leq k$ such that 
    \begin{align*}
        \Bar{n}^{\prime}_i=
        \begin{cases}
            n_j^{\prime} &\text{ if } i=i_j \\
            0 &\text{ else }
        \end{cases}.
    \end{align*}
    Similar condition holds for $\Bar{I}^{\prime\prime}$.
    \item For $\forall 1\leq i \leq k$, at least one of $\Bar{n}_i^{\prime},\Bar{n}_i^{\prime\prime}$ is non-zero
\end{itemize}
\end{lemma}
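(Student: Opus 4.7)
The plan is to construct explicit maps in both directions and verify that they are mutually inverse; the argument is pure bookkeeping about the way zero $\operatorname{GL}$-entries are deleted when forming $M_s^!$ from $s$.

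In the forward direction, given $(M,s) \in \mathcal{M}(L)$ with $M = \prod_{i=1}^k \operatorname{GL}(n_i) \times \operatorname{Sp}(2m)$ and $s = \{(n_i',n_i'')\}_{i=1}^k$, I assign the triple $(k,(n_1',\ldots,n_k'),(n_1'',\ldots,n_k''))$. Reading off the definition of $M_s^!$, the identity $L = M_s^!$ is precisely the statement that the order-preserving subsequence of nonzero entries of $\bar{I}'$ equals $I'$, and similarly for $\bar{I}''$; this yields the second bullet, and the first bullet follows since these subsequences have lengths $k'$ and $k''$. Condition~3 is just the observation that each $\operatorname{GL}$ block of a standard Levi has rank $\geq 1$, i.e.\ $n_i = \bar{n}_i' + \bar{n}_i'' > 0$.

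For the inverse direction, from $(k,\bar{I}',\bar{I}'')$ I set $\bar{n}_i := \bar{n}_i' + \bar{n}_i''$, which is positive by condition~3, and form $M := \prod_{i=1}^k \operatorname{GL}(\bar{n}_i) \times \operatorname{Sp}(2\bar{m})$ with $\bar{m} := n - \sum_i \bar{n}_i$, together with $s := \{(\bar{n}_i',\bar{n}_i'')\}_{i=1}^k$. To check $s \in \mathcal{E}(M,G^!)$ one uses condition~2 to get $\sum_i \bar{n}_i' = \sum_j n_j' \leq n'$ (and similarly for the double primes), where the last inequality holds because $m' = n' - \sum_j n_j' \geq 0$ inside the standard Levi $L$; the same identities yield $\bar{m} = m' + m''$, so the $\operatorname{Sp}$ half-rank matches the one required by $M_s^!$, and unpacking the definition then recovers $L = M_s^!$.

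The two maps are visibly inverse, being nothing more than reading off or writing down the same pair of length-$k$ sequences. The only conceptual point to watch is that condition~2 is an ordered-subsequence condition, not a multiset condition; this is forced because $L$ itself is a standard Levi whose $\operatorname{GL}$ blocks appear in a fixed order. I do not foresee any real obstacle beyond checking these small compatibilities.
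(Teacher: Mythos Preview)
Your proposal is correct and follows essentially the same approach as the paper: you construct the two maps explicitly by passing between $(M,s)$ and the triple $(k,\bar{I}',\bar{I}'')$ via the length-$k$ sequence of pairs, and verify they are mutual inverses. If anything, your version is more careful than the paper's in checking the side conditions (that $s \in \mathcal{E}(M,G^!)$ and that the $\operatorname{Sp}$ half-ranks match via $\bar{m} = m' + m''$).
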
 
\begin{proof}
    Given a triple $(k,\Bar{I}^{\prime},\Bar{I}^{\prime\prime})$ as above, the corresponding Levi is 
$$M=\prod_{i=1}^k\operatorname{GL}(\Bar{n}_i^{\prime}+\Bar{n}_i^{\prime\prime}) \times \operatorname{Sp}(2m)$$
and the sequence $\{(\Bar{n}_i^{\prime},\Bar{n}_i^{\prime\prime})\}_{i=1,...,k}$ gives an element $s\in \mathcal{E}(M,G^!)$ with $M^!_s=L$.

Conversely, if $M_s^!=L$ where $s=\{(\bar{n}^{'}_i,\bar{n}^{''}_i)\}_{i=1,...,k}$ then, by the definition of $M_s^!$, there is a subsequence of $\{\bar{n}_i^{\prime}\}_i$ (resp. $\{\bar{n}^{\prime\prime}_i\}_i$) which is equal to $I^{\prime}$ (resp. $I^{\prime\prime}$) and  entries of $\{\bar{n}_i^{\prime}\}_i$ (resp. $\{\bar{n}_i^{\prime\prime}\}_i$) don't belong to this subsequence equal to $0$. Hence $(k,\{(\bar{n}^{'}_i\},\{\bar{n}^{''}_i\})$ is a triple as above. Clearly the above two maps are inverse to each other.
\end{proof}
 Note that the latter set of triples depends only on the pair $(k^{\prime},k^{\prime\prime})$, thus we may denote it as $\mathcal{M}(k^{\prime},k^{\prime \prime})$. Note that we have $r(L)=r(G)-(k^{\prime}+k^{\prime\prime})$ and if $M$ is the standard Levi attached to a triple $(k,\Bar{I}^{\prime},\Bar{I}^{\prime\prime}) \in \mathcal{M}(k^{\prime},k^{\prime \prime})$ then we also have $r(M)=r(G)-k$. Therefore if we set
$$f(k^{\prime},k^{\prime\prime}):=\sum_{(k,\Bar{I}^{\prime},\Bar{I}^{\prime\prime})\in \mathcal{M}(k^{\prime},k^{\prime \prime})} (-1)^{k}$$
then we have 
$$\sum_{(M,s)\in \mathcal{M}(L)}(-1)^{r(M)}=(-1)^{r(G)}f(k^{\prime},k^{\prime\prime}).$$
Hence we are reduced to proving the following lemma:
\begin{lemma}
    $f(k^{\prime},k^{\prime\prime})=(-1)^{k^{\prime}+k^{\prime\prime}}$
\end{lemma}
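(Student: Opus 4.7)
The plan is to recognize $f(k', k'')$ as a coefficient extraction from an explicit rational generating function. First I would translate the data of a triple $(k, \bar{I}', \bar{I}'')$ into a pair of subsets of $\{1, \ldots, k\}$: since the nonzero entries of $\bar{I}'$ (respectively $\bar{I}''$) must appear in the fixed order prescribed by $I'$ (respectively $I''$), the triple is uniquely determined by $A' := \{i : \bar{n}_i' \neq 0\}$ and $A'' := \{i : \bar{n}_i'' \neq 0\}$. The defining conditions of $\mathcal{M}(k', k'')$ translate to $|A'| = k'$, $|A''| = k''$, and $A' \cup A'' = \{1, \ldots, k\}$.

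Next I would pass to generating functions. Weight each position in $\{1, \ldots, k\}$ by $x$, $y$, or $xy$ according as it lies in $A'$ only, in $A''$ only, or in both; the contribution of a single position is $x + y + xy$, and $k$ independent positions contribute $(x + y + xy)^k$. The coefficient of $x^{k'} y^{k''}$ in $(x + y + xy)^k$ equals the number of pairs $(A', A'')$ with $|A' \cup A''| = k$, $|A'| = k'$, and $|A''| = k''$. Summing over $k \geq 0$ with sign $(-1)^k$ in the formal power series ring $\mathbb{Z}[[x, y]]$ yields
\begin{equation*}
\sum_{k \geq 0} (-1)^k (x + y + xy)^k = \frac{1}{1 + x + y + xy} = \frac{1}{(1+x)(1+y)} = \sum_{k', k'' \geq 0} (-1)^{k' + k''} x^{k'} y^{k''},
\end{equation*}
the decisive step being the factorization $1 + x + y + xy = (1+x)(1+y)$. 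Comparing the coefficients of $x^{k'} y^{k''}$ on the two ends gives $f(k', k'') = (-1)^{k' + k''}$.

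No serious obstacle is anticipated. Convergence is harmless: for fixed $k', k''$, only values $\max(k', k'') \leq k \leq k' + k''$ contribute to the coefficient of $x^{k'} y^{k''}$, so the extraction is a finite integer sum. The entire argument reduces, after the clean bijection above, to the observation $1 + x + y + xy = (1+x)(1+y)$, which combinatorially reflects that choosing a pair $(A', A'')$ is the same as two independent membership choices at each position. Should a reader prefer a self-contained combinatorial derivation, conditioning on which of the three classes contains the last position yields the recursion
\begin{equation*}
f(k', k'') = -f(k'-1, k'') - f(k', k''-1) - f(k'-1, k''-1)
\end{equation*}
with base case $f(0, 0) = 1$ (and $f = 0$ when either argument is negative), from which $f(k', k'') = (-1)^{k' + k''}$ follows by a one-line induction on $k' + k''$.
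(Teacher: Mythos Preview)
Your proof is correct. Your primary route---encoding triples $(k,\bar{I}',\bar{I}'')$ by pairs of subsets $(A',A'')$ of $\{1,\ldots,k\}$ with $|A'|=k'$, $|A''|=k''$, $A'\cup A''=\{1,\ldots,k\}$, and then reading off $f(k',k'')$ as the coefficient of $x^{k'}y^{k''}$ in $\sum_{k\ge 0}(-1)^k(x+y+xy)^k=1/((1+x)(1+y))$---is genuinely different from what the paper does. The paper argues directly by recursion: it partitions $\mathcal{M}(k',k'')$ according to which of the three types (only $\bar{I}'$ nonzero, only $\bar{I}''$ nonzero, both nonzero) the \emph{first} position falls into, obtains $f(k',k'')=-f(k'-1,k'')-f(k',k''-1)-f(k'-1,k''-1)$, and finishes by induction. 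This is exactly the alternative you sketch at the end (modulo conditioning on the first rather than the last position), so you have in effect supplied both proofs. The generating-function route is slicker and exposes the structural reason for the identity (the factorization $1+x+y+xy=(1+x)(1+y)$), while the paper's recursion is more elementary and avoids any appeal to formal power series.
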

\begin{proof}
    We can divide $\mathcal{M}(k^{\prime},k^{\prime \prime})$ into three subsets according to the following conditions:
    \begin{itemize}
        \item The first entry of $\Bar{I}^{\prime}$ is non-zero while the first entry of $\Bar{I}^{\prime\prime}$ is zero.
        \item The first entry of $\Bar{I}^{\prime\prime}$ is non-zero while the first entry of $\Bar{I}^{\prime}$ is zero.
        \item The first entries of $\Bar{I}^{\prime}, \Bar{I}^{\prime\prime}$ are both non-zero.
    \end{itemize}
    Then by deleting the first entries of $\Bar{I}^{\prime}, \Bar{I}^{\prime\prime}$ simultaneously,  the above subsets are in bijection with $\mathcal{M}(k^{\prime}-1,k^{\prime\prime}), \mathcal{M}(k^{\prime},k^{\prime \prime}-1), \mathcal{M}(k^{\prime}-1,k^{\prime \prime}-1)$ respectively. Thus we have
    $$f(k^{\prime},k^{\prime\prime})=-f(k^{\prime}-1,k^{\prime\prime})-f(k^{\prime},k^{\prime\prime}-1)-f(k^{\prime}-1,k^{\prime\prime}-1)$$
    and the lemma follows from induction on $k^{\prime},k^{\prime\prime}$.
\end{proof}

Now we move to general case. Let $G^!$ be an (not necessarily elliptic) endoscopic group of $\tilde{G}$. Then there exists a standard Levi subgroup $\tilde{M}$ such that $G^!$ is an elliptic endoscopic group of $\tilde{M}$. By \cite[Theorem 1.7]{aubert1995dualite} we have
$$i_{\tilde{M}}^{\tilde{G}}\circ D_{\tilde{M}}=D_{\tilde{G}} \circ i_{\tilde{M}}^{\tilde{G}}.$$
On the other hand, since $\mathcal{T}^{\vee}_{G^!,\tilde{M}}$ is identity on $\operatorname{GL}$ factors, the above ellip
$${D}_{\tilde{M}}\circ \mathcal{T}^{\vee}_{G^!,\tilde{M}}= \mathcal{T}^{\vee}_{G^!,\tilde{M}}\circ D_{G^!}$$
Therefore we have 
\begin{align*}
    {D}_{\tilde{G}}\circ \mathcal{T}^{\vee}_{G^!,\tilde{G}} &=D_{\tilde{G}}\circ  i^{\tilde{G}}_{\tilde{M}} \circ \mathcal{T}^{\vee}_{G^!,\tilde{M}}\\
    &= i^{\tilde{G}}_{\tilde{M}} \circ \mathcal{T}^{\vee}_{G^!,\tilde{M}}\circ D_{G^!}\\
    &= \mathcal{T}^{\vee}_{G^!,\tilde{G}}\circ D_{G^!}
\end{align*}
which finishs the proof of Theorem \ref{main}.

 \renewcommand{\bibname}{References}
	\bibliographystyle{alpha}
	\bibliography{MpAZ}

 \vspace{1em}
\begin{flushleft} \small
	F. Chen: Yau Mathematical Sciences Center, Tsinghua University, Haidian District, Beijing 100084, China
 \\
	E-mail address: \href{mailto:fchen@tsinghua.edu.cn}{\texttt{fchen@tsinghua.edu.cn}}
\end{flushleft}
\end{document}